\def\x{\bar{x}}
\def\y{\bar{y}}
\renewcommand\m{\bar{\mathfrak{m}}}
\renewcommand\subseteq{\subset}
\renewcommand\supseteq{\supset}
\def\mm{\mathfrak{m}}
\def\wt{{\rm wt}}
\begin{document}
\tolerance=500
\title{On solvability of the automorphism group \\
 of a %local
  finite-dimensional algebra}
\author{Alexander Perepechko}
\email{perepechko@mccme.ru}
\address{Department of Higher Algebra, Faculty of Mechanics and Mathematics, Moscow State University, Leninskie Gory, Moscow, 119991, Russia}
\address{Universit\'e Grenoble I, Institut Fourier,
UMR 5582 CNRS-UJF, BP 74, 38402 St. Martin d'H\`eres c\'edex,
France}
\maketitle
\begin{abstract}
Consider an automorphism group of a finite-dimensional %local
algebra.
S.~Halperin conjectured that the unity component of this group is solvable if the algebra is a complete intersection.
The solvability criterion recently obtained by M.~Schulze \cite{Schulze} provides a proof to a local case of this conjecture as well as gives an alternative proof of S.S.--T.~Yau's theorem \cite{Yau} based on a powerful result due to G.~Kempf.
In this note we finish the proof of Halperin's conjecture and study the extremal cases in %of?
Schulze's criterion, where %when?
the algebra of derivations is non-solvable. This allows us to reduce a direct, self-contained %indicates that the proof is completely contained in this note.
proof of Yau's theorem.

%The criterion on the solvability of the unity component of this group recently obtained by M.~Schulze \cite{Schulze} allowed him to give an alternative proof of Yau's theorem based on a powerful result due to G.~Kempf. In this note we study the extremal cases in Schulze's criterion, where the algebra of derivations is non-solvable. This allows us to reduce a direct, self-contained proof of Yau's theorem.
\end{abstract}

\section{Introduction}
Let $\K$ be an algebraically closed field of characteristic zero.
We denote by $R$ the algebra of formal power series $\K[\![x_1,\ldots,x_n]\!]$ % or a polynomial algebra $\K[x_1,\ldots,x_n]$.
and by $\mm$ the maximal ideal $(x_1,\ldots,x_n)\lhd R$. Let $I\subset\mm$ % для единообразия: везде использовать \subset или \subseteq?
be such that $S=R/I$ is a finite-dimensional (or Artin) %как лучше артиновость упомянуть?
local algebra with the maximal ideal $\m=\mm/I$.

Consider the automorphism group $\Aut S$. This is an affine algebraic group with the tangent algebra being the Lie algebra of derivations $\Der S$; see \cite[Ch.1, \textsection2.3, ex. 2]{ViOn}. %точна ли ссылка
So the solvability of the connected component of unity $(\Aut S)^\circ$ (or \emph{unity component} for short) is equivalent to the solvability of the Lie algebra $\Der  S$.
%TODO вставить про теорему Яу из abstract.
%This question of solvability is the main problem examined in this paper.

In 2009 M.~Schulze obtained the following criterion, which has several applications discussed below.

\begin{theo}[Schulze, \cite{Schulze}]\label{schulze}
Let $S=R/I$ be a finite-dimensional local algebra, where $I\subseteq \mm^l$. If the inequality
\begin{equation}\label{Schulze-inequality}
  \dim(I/\mm I)<n+l-1
\end{equation} holds, then the algebra of derivations $\Der S$ is solvable.
\end{theo}

This article provides a generalization of that criterion for a non-local case, see Corollary~\ref{schulze global}, as well as presents a new criterion based on similar techniques, see Theorem \ref{mycriterion}. These two criteria work for different types of algebras. %later?

To mention some applications of Schulze's criterion % in the global setting? надо сказать где-то, что гипотеза Гальперина нелокальна
let us consider a \emph{regular sequence} $f_1,\ldots,f_n\in\K[x_1,\ldots,x_n]$\footnote{i.e. the image of $f_i$ in the quotient $\K[x_1,\ldots,x_n]/(f_1,\ldots,f_{i-1})$ is not a zero divisor for all $i$.}. Equivalently, the quotient $S=\K[x_1,\ldots,x_n]/(f_1,\ldots,f_{n})$ is non-trivial and finite-dimensional and is called a global \emph{complete intersection}. % global complete intersection?

\begin{conjecture}[Halperin, 1987\footnote{This conjecture was proposed by S. Halperin at the conference in honor of J.--L.~Koszul.}]\label{Halperin}
%  Let $f_1,\ldots,f_n\in\CC[x_1,\ldots,x_n]$ be a regular sequence.
Suppose that a finite-dimensional algebra $S$ is a global complete intersection.
Then the unity component $(\Aut S)^\circ$ of the automorphism group of $S$ is solvable.
\end{conjecture}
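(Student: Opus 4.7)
The plan is to reduce Halperin's conjecture to the local setting covered by Theorem \ref{schulze}. Since $S$ is a finite-dimensional commutative $\K$-algebra, it splits as a product $S \cong \prod_{i=1}^r S_i$ of local Artin algebras indexed by its maximal ideals. Every derivation $D \in \Der S$ annihilates each idempotent $e \in S$: from $e^2 = e$ one obtains $(1-2e)D(e) = 0$, and $1-2e$ is a unit, whence $D(e) = 0$. Consequently $D$ respects the factorization and $\Der S \cong \bigoplus_{i=1}^r \Der S_i$, so it suffices to show that each $\Der S_i$ is solvable.

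Next, I would verify that each local factor $S_i$ is itself a local complete intersection. If $S_i$ corresponds to a closed point $(a_1,\ldots,a_n) \in \K^n$, then $S_i$ equals its own completion and is isomorphic to $R_i/(f_1,\ldots,f_n)$ with $R_i = \K[\![x_1-a_1,\ldots,x_n-a_n]\!]$. Since localization and completion are flat, the regular sequence $f_1,\ldots,f_n \in \K[x_1,\ldots,x_n]$ remains regular in $R_i$, which exhibits $S_i$ as a local complete intersection.

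To apply Theorem \ref{schulze}, we need a presentation with $I \subseteq \mm^l$ for $l \geq 2$. Let $e_i$ denote the embedding dimension of $S_i$. If some $f_j$ has nonzero linear part in the coordinates $x_k - a_k$, a change of variables in $R_i$ lets us take $f_j$ equal to a coordinate; we then eliminate both that coordinate and $f_j$, preserving the complete intersection property. After finitely many such reductions we obtain $S_i \cong R_i'/I_i'$ where $R_i' = \K[\![y_1,\ldots,y_{e_i}]\!]$ has maximal ideal $\mm_i'$ and $I_i' \subseteq (\mm_i')^2$ is generated by a regular sequence of length $e_i$; in particular $\dim(I_i'/\mm_i' I_i') = e_i$. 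Taking $l = 2$ in Theorem \ref{schulze}, inequality (\ref{Schulze-inequality}) reads $e_i < e_i + 1$, which holds, so $\Der S_i$ is solvable. Combining over $i$ yields solvability of $\Der S$, and hence of $(\Aut S)^\circ$.

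The main obstacle is the commutative-algebra content of the second and third paragraphs: verifying that the complete intersection property descends to each local factor and survives passage to a minimal presentation whose number of generators matches the embedding dimension. These are standard facts, but they carry the real content of the reduction; once they are in place, Schulze's criterion and the splitting of $\Der S$ into local pieces finish the argument.
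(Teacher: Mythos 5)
Your proposal is correct and follows essentially the same route as the paper: decompose $S$ into its local factors at the maximal ideals, note that each factor is a local complete intersection in the corresponding power series ring $R_i$, reduce to generators in $\mm^2$, and apply Schulze's criterion. The only cosmetic differences are that you check the splitting at the level of $\Der S$ (derivations annihilate idempotents) where the paper works with $(\Aut S)^\circ$ preserving the idempotent decomposition (Proposition~\ref{aut decomposition}, Theorem~\ref{global criterion}), and that you re-derive Corollary~\ref{intersect} instead of citing it.
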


 Just a few months later H.~Kraft and C.~Procesi proved the conjecture in the case of homogeneous polynomials. %It is easy to see that in this case the quotient algebra is local.
\begin{theo}[Kraft--Procesi, \cite{KrPro}]\label{KrPro}
   Assuming $\K=\CC$, let $f_1,\ldots,f_n\in \K[x_1,\ldots,x_n]$ be homogeneous polynomials, and the algebra
   \begin{equation}
     S=\K[x_1,\ldots,x_n]/(f_1,\ldots,f_n)
   \end{equation} be finite-dimensional. Then the unity component $(\Aut S)^\circ$ is solvable.
\end{theo}
%Meanwhile, in the general case the conjecture remains open. Nevertheless, in the local case it turns out to be a direct consequence of Schulze's Theorem \ref{schulze}.
In this case the algebra $S$ is local. Thereby, the generalization of Theorem~\ref{KrPro} turns out to be a direct consequence of Schulze's Theorem \ref{schulze} as follows.

\begin{corollary}[Schulze, {\cite[Corollary 2]{Schulze}}]\label{intersect}
 Given a local complete intersection $S=R/(f_1,\ldots,f_n)$, the group $(\Aut S)^\circ$ is solvable.
\end{corollary}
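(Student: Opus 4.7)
The plan is to invoke Schulze's Theorem~\ref{schulze}, possibly after reducing the number of variables. Let $l$ denote the largest integer such that $I\subseteq\mm^l$. First I would note that $\dim_\K(I/\mm I)=n$: this dimension is at most $n$ because $I$ has $n$ given generators, and at least $n$ because $R/I$ is Artinian and $R$ is a regular local ring of dimension $n$, so $I$ has height $n$ and Krull's height theorem forbids any smaller generating set. Hence Schulze's inequality~(\ref{Schulze-inequality}) becomes the single requirement $l\geq 2$.

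If $l\geq 2$, Theorem~\ref{schulze} applies directly and yields the solvability of $\Der S$, and therefore of $(\Aut S)^\circ$. Otherwise $l=1$ and some $f_i$ has a nonzero linear part; by the formal implicit function theorem I can change coordinates in $R$ so that $f_i$ becomes a new variable $y_1$. The quotient map $R\to R/(y_1)$ then identifies
\[
  S \;\cong\; \K[\![y_2,\dots,y_n]\!]/(\bar f_j)_{j\neq i},
\]
where $\bar f_j$ denotes the image of $f_j$ modulo $y_1$. Because a regular sequence in a Noetherian local ring is invariant under permutation of its terms, $(\bar f_j)_{j\neq i}$ remains a regular sequence, of length $n-1$, in the smaller regular local ring; in particular $S$ is again a local complete intersection, now in $n-1$ variables. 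Induction on $n$, with base case $n=0$ (where $S=\K$ and $(\Aut S)^\circ$ is trivial), completes the argument.

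The only delicate step is the coordinate change, which hinges on the fact that any element of $\mm\setminus\mm^2$ in a formal power series ring can be taken as part of a regular system of parameters, together with the permutation invariance of regular sequences so that the complete intersection hypothesis survives the reduction. Both are standard in Noetherian local algebra, so no serious obstacle is expected beyond careful bookkeeping of these reductions.
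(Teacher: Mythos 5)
Your proposal is correct and follows essentially the same route as the paper: reduce to the case where all generators lie in $\mm^2$ (so $l\ge 2$) and then apply Theorem~\ref{schulze}, using that the minimal number of generators of $I$ is $n$, so that inequality~(\ref{Schulze-inequality}) reads $n<n+l-1$. The paper compresses your $l=1$ induction step into the single phrase ``we may suppose $f_i\in\mm^2$''; your elimination of a variable via the formal implicit function theorem, with permutability of regular sequences preserving the complete intersection property, is exactly the standard justification of that reduction.
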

\begin{proof}
We may suppose that $f_i\in\mm^2$. Then $\dim(I/\mm I)=n$ and so (\ref{Schulze-inequality}) is fulfilled.
\end{proof}

%Thus, to finish the proof of Conjecture~\ref{Halperin} it suffices to deduce the global case from the local one. We do it in Section \ref{Halperin_section}.
In Section \ref{Halperin_section} we introduce a strict criterion for the solvability of the algebra of derivations $\Der S$ for a non-local finite-dimensional algebra $S$, see Theorem~\ref{global criterion}. This allows us to deduce the global case of Conjecture~\ref{Halperin} from the local one, thus to finish its proof.
%Thus, the hypothesis of S.~Halperin is proved in the case of local factor algebra $\K[x_1,\ldots,x_n]/(f_1,\ldots,f_n)$.  %(see Corollary \ref{generators}).
 %Notice that this algebra is local (and coincides with $\K[\![x_1,\ldots,x_n]\!]/(f_1,\ldots,f_n)$) if and only if the intersection of hypersurfaces $\mathbb{V}(f_i)$ consists of a single point.

Now let us consider isolated hypersurface singularities (or IHS, for short).
Let $p\in\K[x_1,\ldots,x_n]$ % не ряд?
 be such that the hypersurface $\{p=0\}\subset\K^n$ has an isolated singularity $H=(\{p=0\},0)$ at the origin.
Let $J(p)=\left(\frac{\partial p}{\partial x_1},\ldots,\frac{\partial p}{\partial x_n}\right)$ be a \emph{Jacobian ideal} of $p$.
The quotient $A(H)=\K[\![x_1,\ldots,x_n]\!]/(p,J(p))$ is called a \emph{local algebra} or a \emph{moduli algebra} of the IHS $H$.

  Since the formal power series ring is local, the algebra $A(H)$ is local as well. There is an analogue of the Hilbert Nullstellensatz for the germs of analytic functions (called \emph{R\"uckert Nullstellensatz}, see \cite[Theorem 3.4.4]{Jong}, \cite[30.12]{abhya}, \cite{Ruckert}), which holds for formal power series as well, since there is a purely algebraic proof. So, the radical $\sqrt{(p,J(p))}$ coincides with the maximal ideal $\mm$. Thus, the ideal $(p,J(p))$ contains some degree of the maximal ideal or, equivalently, the algebra $A(H)$ is finite-dimensional. Conversely, if the algebra $A(H)$ is finite-dimensional then the singularity $H$ is isolated. Indeed, the finite dimensionality of $A$ is equivalent to inclusion $\mm^r\subseteq I$ for some $r$, or $\sqrt{(p,J(p))}=\mm$. It implies $\mathbb{V}(p,J(p))=0$, and $H$ is the only singularity in some neighbourhood of zero.

It has been proven by J. Mather and S.~S.--T.~Yau in \cite{Mather-Yau} that two IHS are biholomophically equivalent if and only if their moduli algebras are isomorphic. Thus, the finite-dimensional local algebra $A(H)$ defines the IHS $H$ up to an analytic isomorphism.

In order to determine when a finite-dimensional local algebra is a moduli algebra of some IHS, S.S.--T.~Yau \cite{Ya1} introduced a Lie algebra of derivations $L(H)=\Der A(H)$ called sometimes a \emph{Yau algebra} and obtained the following result.
\begin{theo}[Yau,\cite{Yau}]\label{yau}
The algebra $L(H)$ is solvable.
\end{theo}

Note that generally the Yau algebra does not uniquely determine its moduli algebra. But for \emph{simple} singularities this property holds with only one exception. Their classification is well known and consists of two infinite series $A_k, D_k$ and three exceptional singularities $E_6, E_7, E_8$; e.g. see \cite[Chapter 2]{ArnGV}.
A. Elashvili and G.~Khimshiashvili %in their study of simple singularities
proved the following fact.
\begin{theo}[Elashvili--Khimshiashvili, {\cite[Theorem 3.1]{Elash}}]
  Let $H_1$ and $H_2$ be two simple IHS, except the pair $A_6$ and $D_5$. Then $L(H_1)\cong L(H_2)$ if and only if $H_1$ and $H_2$ are analytically
   isomorphic.
\end{theo}

\begin{remark}\label{quasihom}
  Assume the polynomial $p$ is quasi-homogeneous, i.e.
  \begin{equation}
   p(\lambda^{k_1}x_1,\ldots,\lambda^{k_n}x_n)=\lambda^k p(x_1,\ldots,x_n)\mbox{ for some fixed }k,k_1,\ldots,k_n\in\NN.
  \end{equation}
     Then $p\in J(p)$ and the moduli algebra $\K[\![x_1,\ldots,x_n]\!]/(p,J(p))$ is a complete intersection. Under this assumption Theorem \ref{yau} is a particular case of Corollary \ref{intersect}.
\end{remark}

In \cite{Schulze} M.~Schulze deduces Theorem \ref{yau} from his criterion.
In order to prove it he uses the following deep result of G.~Kempf.
\begin{theo}[Kempf, {\cite[Theorem 13]{Kempf}}]
  Let $p$ be a homogeneous polynomial of degree $d\ge3$ defined as a regular function on the space $\CC^n$ endowed with a linear action of a semisimple group $G$.
  If the Jacobian $J(p)$ is a $G$-invariant subspace then there exists such a $G$-invariant polynomial $q$ that $J(p)=J(q)$.
\end{theo}
%M. Schulze poses a question if Theorem \ref{yau} can be obtained without the Kemph result.

 \begin{definition}
Let us call the finite-dimensional local algebra $S$ an \emph{extremal algebra} if the equality $\dim{I/\mm I}=l+n-1$ holds.
\end{definition}
The description of the extremal algebras with a non-solvable algebra of derivations allows to deduce Theorem~\ref{yau} from Schulze's criterion without using the Kemph result, as explained in Section \ref{extremal algebras}.

\begin{definition}%REDO !!
  Let us say that a graded local finite-dimensional algebra $S=R/I$ is \emph{narrow} if there holds an inequality
  \begin{equation}
    \dim I_k-\dim (\m I)_k\le k\mbox{ for all } k=1,2\ldots,
  \end{equation}
  where $J_k$ is the $k$th graded component of a graded ideal $J$. In other words, there exists such a set of homogeneous generators of $I$ that the number of generators of degree $k$ is not greater than $k$ for each $k$.
\end{definition}
\begin{remark}
  If $I\subseteq\mm^r$ then for an algebra $S=R/I$ to be narrow it is sufficient to check the inequality for $k\le r,\,$ since $I_k=(\mm I)_k$ for $k>r$.
\end{remark}

Recall that
an \emph{associated graded algebra} of the local algebra $S$ is the algebra $\gr S=\K\oplus (\m/\m^2)\oplus (\m^2/\m^3)\oplus\ldots$, i.e. $(\gr S)_i=\m^i/\m^{i+1}$. Now introduce a solvability criterion as follows.

\begin{theo}\label{mycriterion}
  Suppose that the associated graded algebra $\gr S$ of a local finite-dimensional algebra $S$ is narrow. Then the algebra of derivations $\Der S$ is solvable.
\end{theo}

The proof is given below. Finally, in the last section we give a lower bound for the dimension of the automorphism group and obtain an algebra with the unipotent automorphism group.

Let us mention a related result on solvability of the group of equivariant automorphisms. Consider a connected affine algebraic group $G$ and an irreducible affine $G$-variety $X$. Assume that the number of $G$-orbits on $X$ is finite and $X$ contains a $G$-fixed point. Then the unity component $(\Aut_G X)^\circ$ of the group of $G$-equivariant automorphisms of the variety $X$ is solvable; see \cite[Theorem~1]{ArTi}.

\section{Solvability criteria}\label{analysis}
In this section we provide a simplified proof of Theorem~\ref{schulze} and introduce a new solvability criterion. %REDO

   If $I\supseteq \mm^r\lhd\K[x_1,\ldots,x_n]$ for some $r$ then $R/\widetilde{I}\cong\K[x_1,\ldots,x_n]/I$, where $\widetilde{I}$ is an ideal in the algebra of formal power series generated by $I$. Therefore it makes no difference whether the local algebra is obtained by factorization of the algebra of polynomials or the algebra of formal power series.

  \begin{proposition}\label{genW}
    Suppose that the ideal $I\lhd R$ is represented in the form $I=W\oplus\mm I$. Then the ideal $(W)$ generated by subspace $W$ coincides with $I$.
  \end{proposition}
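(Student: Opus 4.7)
The plan is to derive this from Nakayama's lemma applied to the finitely generated $R$-module $I/(W)$. The key observations are that $R$ is Noetherian (it is the formal power series ring in finitely many variables over $\K$), hence $I$ and every quotient of $I$ are finitely generated $R$-modules, and that $R$ is local with maximal ideal $\mm$.

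First I would note that the hypothesis $I = W \oplus \mm I$ in particular yields the module-theoretic equality $I = W + \mm I$. Since $W \subseteq (W)$, this upgrades to $I = (W) + \mm I$. Passing to the quotient module $M := I/(W)$, this reads $M = \mm M$.

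Next I would invoke Nakayama's lemma: since $M$ is a finitely generated module over the local ring $(R,\mm)$ and satisfies $M = \mm M$, we conclude $M = 0$, i.e.\ $(W) = I$.

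There is essentially no obstacle here beyond checking that the standard Nakayama hypotheses are met; the only point worth being careful about is the finite generation of $I$ (and hence of $M$), which is automatic from the Noetherianity of $R = \K[\![x_1,\dots,x_n]\!]$. In the setting of the paper one could alternatively observe that $I \supseteq \mm^r$ for some $r$ (since $S = R/I$ is finite-dimensional), so $I/\mm I$ is finite-dimensional over $\K$, and therefore $W$ itself is finite-dimensional; this gives an even more elementary version of the argument without appealing to Nakayama in full generality, by iterating $I = (W) + \mm I = (W) + \mm((W) + \mm I) = (W) + \mm^2 I = \dots = (W) + \mm^r I \subseteq (W)$.
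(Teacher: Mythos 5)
Your main argument is correct and is essentially the paper's own proof: both apply Nakayama's lemma to the finitely generated module $I/(W)$ (the paper phrases it through the quotient map $R\to R/(W)$, which is the same thing). One caveat: the "more elementary" iteration you sketch at the end does not actually close, since the final inclusion $(W)+\mm^r I\subseteq (W)$ presupposes $\mm^r I\subseteq (W)$, which is not known at that stage; to avoid Nakayama you would instead have to sum the resulting series $w_1+w_2+\cdots$ with $w_j\in\mm^{j-1}(W)$ using the $\mm$-adic completeness of $R=\K[\![x_1,\ldots,x_n]\!]$ and the finite-dimensionality of $W$ -- precisely the point where the statement fails for polynomial rings, as the paper's example $\mm^2=\bangle{x^2-x^3}\oplus\mm^3$ in $\K[x]$ shows.
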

  \begin{proof}
    Consider the factorization mapping $\varphi\colon R\to R/(W)$. The quotient algebra is a local algebra with the maximal ideal $\varphi(\mm)$. The decomposition $I=W\oplus\mm I$ implies $\varphi(I)=\varphi(\mm I)$. Since the ring $R$ is Noetherian, the ideals $I$ and $\varphi(I)$ are finitely generated. Then by Nakayama's Lemma (see \cite[Proposition 2.6]{AtiM}) there holds $\varphi(I)=0$, i.e. $(W)=I$. % REDO (see -> ;see
  \end{proof}
\begin{corollary}\label{generators}
  The minimal number of generators of the ideal $I$ is equal to $\dim W=\dim(I/\mm I)$.
\end{corollary}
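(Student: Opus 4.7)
The plan is to combine Proposition~\ref{genW} with a direct application of Nakayama's lemma (essentially the dual statement) to pin the minimal number of generators down from both sides.

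First I would observe that Proposition~\ref{genW} already yields the upper bound. Since $I/\mm I$ has $\K$-dimension $\dim W$, any basis $w_1,\ldots,w_d$ of $W$ (with $d=\dim W$) generates $I$ as an ideal by that proposition. Hence the minimal number of generators is at most $\dim(I/\mm I)$.

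For the matching lower bound, I would take any set of ideal generators $g_1,\ldots,g_k$ of $I$ and argue that their images span $I/\mm I$ as a $\K$-vector space. Indeed, for any $f\in I$ we can write $f=\sum r_i g_i$ with $r_i\in R$, and splitting each $r_i=r_i(0)+\tilde r_i$ with $\tilde r_i\in\mm$ gives
\begin{equation}
  f\equiv\sum_{i=1}^k r_i(0)\,g_i\pmod{\mm I},
\end{equation}
since $\tilde r_i g_i\in\mm I$. Thus the classes $\bar g_1,\ldots,\bar g_k$ span $I/\mm I$, whence $k\ge\dim(I/\mm I)$.

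The two inequalities together give the claim. There is no serious obstacle here: the only delicate point is the reduction modulo $\mm I$ showing that an ideal-generating set induces a vector-space spanning set of $I/\mm I$, which is the standard Nakayama-style argument and is essentially already embedded in the proof of Proposition~\ref{genW}.
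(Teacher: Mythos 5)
Your proof is correct and matches the paper's (implicit) reasoning: the paper states this corollary as an immediate consequence of Proposition~\ref{genW}, with the upper bound coming from the fact that a basis of $W$ generates $I$ and the lower bound being exactly your standard Nakayama-style observation that any generating set of $I$ maps onto a spanning set of $I/\mm I$. Nothing is missing.
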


   Note that Proposition \ref{genW} does not hold for the algebra of polynomials. For example take an ideal $I=\mm^2\lhd\K[x]$ and decompose it as follows,
   \begin{equation}
     \mm^2=\bangle{x^2-x^3}\oplus\mm^3.
   \end{equation}
   It is easy to see that the ideal $(x^2-x^3)$ does not coincide with $\mm^2$.

Below we follow \cite{Schulze} with some improvements.

As always we suppose that $S=R/I$, where $\mm^l\supset I\lhd R=\K[\![x_1,\ldots,x_n]\!]$ for $l\ge2$.
Assume that the algebra $\Der S$ is not solvable. Hence it contains an $\ssl_2$-triple $\{e,f,h\}$ with relations $[e,f]=h$, $[h,f]=-2f$, $[h,e]=2e$.
Note that the automorphisms of $S$ preserve the maximal ideal $\m\lhd S$ as it is unique. All powers of the maximal ideal are preserved %by automorphisms
as well.
Therefore the ideals $\m$ and $\m^2$ are $\ssl_2$-submodules. Since the representations of $\ssl_2$ are completely reducible, the ideal $\m$ contains such an $\ssl_2$-submodule $\overline{V}$ that
\begin{equation}
 \m=\overline{V}\oplus {\m}^2.
\end{equation}
Denote by $\vphi\colon R\to S$ the factorization by the ideal $I$. Since $\vphi(\mm^2)=\m^2$ there exists a subspace $V\subset \mm$ %in algebra $R$
 such that $\mm=V\oplus {\mm}^2$ and $\vphi\colon V\overset{\sim}{\rightarrow} \overline{V}$.
Thus, according to Proposition~\ref{genW} %Lemma \ref{generate}
the subspace $V$ generates the ideal $\mm$ and hence the algebra $R$.
So we may assume up to the change of coordinates that $V=\bangle{x_1,\ldots,x_n}$ and $\overline{V}=\bangle{\x_1,\ldots,\x_n}$, where $\x_i=\vphi(x_i)$.

%The $\ssl_2$-representation is linear on $V$, hence
We may introduce an $\ssl_2$-representation on $V$ by the given isomorphism and extend it to $R$. Note that the factorization map $\vphi$ is a homomorphism of $\ssl_2$-modules.
%Indeed, apply an arbitrary $g\in\ssl_2$ to a monomial $x_{i_1}\cdot\ldots\cdot x_{i_s}\in R$. Then
%\begin{multline}
%\vphi(g\circ(x_{i_1}\ldots x_{i_s}))=\vphi(\sum_{j=1}^s x_{i_1}\ldots(g\circ x_{i_j})\ldots x_{i_s})=\\
%\sum_{j=1}^s \vphi(x_{i_1})\ldots\vphi(g\circ x_{i_j})\ldots \vphi(x_{i_s})=g\circ(\x_{i_1}\ldots \x_{i_s}).
%\end{multline}
Therefore the ideal $I\lhd R$ is an invariant subspace of the $\ssl_2$-representation on $R$. %REDO

%Denote by $\wt(z)$ a weight of the  an eigenvector $z$ of the operator $h$ with the eigenvalue $\wt(z)$ by
Given a \emph{weight vector} $z$, i.e. an eigenvector of the operator $h\in\ssl_2$, denote its weight by $\wt(z)\in\ZZ$.
We may suppose that $x_1,\ldots,x_n$ are the weight vectors of the $\ssl_2$-module $V$, and $x_1,\ldots,x_k$, $k\le n$, are the highest weight vectors with weights $n_i=\wt(x_i)$, where $n_1\ge\ldots\ge n_k\ge0,\;\sum (n_i+1)=n$. %In other words, $\Ker e=\bangle{\x_1,\ldots,\x_k}$.
Denote $V_{high}\coloneq\bangle{x_1,\ldots,x_k},\: V_{rest}\coloneq\bangle{x_{k+1},\ldots,x_n}$.

The ideal $\mm I\subset R$ is $\ssl_2$-invariant by the Leibniz rule, hence $I$ contains the complementary $\ssl_2$-submodule $W$ such that $I=W\oplus\mm I$. By Corollary~\ref{generators} its basis is a minimal set generating $I$.

Similarly to $V=V_{high}\oplus V_{rest}$ consider the decomposition
\begin{equation}
  W=W_{high}\oplus W_{rest}
\end{equation} into the subspace  $W_{high}=\bangle{w_1,\ldots,w_s}$, where $w_i$ are the highest weight vectors of $W$, %the simple $\ssl_2$-submodules,
 and the subspace $W_{rest}$ of the remaining weight vectors of $W$. Notice that $W_{rest}\subset \Im f\subseteq (x_{k+1},\ldots,x_n)$ since $\Im f$ is spanned by weight vectors which are not of highest weight. %REDO

 Let $\varphi_i\colon R\to R/J_i$ be the factorization by the ideal $J_i=(x_{i+1},\ldots,x_{n}),\quad i=1\ldots,k$. Since $J_i\supseteq(x_{k+1},\ldots,x_n)\supset W_{rest}$ the equality $W_i\coloneq\varphi_i(W)=\varphi_i(W_{high})$ holds. Note that $\dim W_i\ge i$, because $\K[\![x_1,\ldots,x_i]\!]/(W_i)\cong R/(J_i,W_i)\cong S/(\x_{i+1},\ldots,\x_{n})$ is finite-dimensional. In particular, $s\ge k$.

By induction we can reorder the highest weight vectors $w_{1},\ldots,w_{s}\in W_{high}$ so that $\varphi_i(w_{1}),\ldots,\varphi_i(w_{i})$ become linearly independent in $W_i$ for all $i$. Then $\wt(w_{i})\ge l n_i$ since $w_i$ contains the monomials in variables $\x_{1},\ldots,\x_i$ of degree at least $l$ and $\wt(x_j)=n_j\ge n_i$ for $j\le i$.

\begin{proof}[Proof of Theorem \ref{schulze}]
The deduction above implies that the subspace $V$ contains a non-trivial $\ssl_2$-submodule and that $n_1>0$. We have
  \begin{multline}\label{ineqs}
  \dim I/\mm I= \dim W \ge\sum_{i=1}^k (l n_i+1)  =(n_1-1)l+l+1+\sum_{i=2}^k (l n_i+1)\ge\\
   (n_1-1)+l+1+\sum_{i=2}^k (n_i+1) =\sum_{i=1}^k(n_i+1)+l-1=n+l-1.
  \end{multline}
 Thus, Theorem \ref{schulze} is proved.
\end{proof}

\begin{proposition}\label{unipotent kernel}
 There exists a natural mapping $\vphi\colon\Aut S\to\Aut (\gr S)$ with a unipotent kernel. %, and the kernel of $\vphi$ is a unipotent group.
\end{proposition}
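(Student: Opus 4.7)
The plan is to build $\varphi$ from functoriality of the $\m$-adic filtration, and then to show that if $\alpha$ lies in the kernel then $\alpha-\mathrm{id}$ strictly raises the filtration degree, which forces $\alpha$ to be unipotent.

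First I would observe that since $S$ is local, the maximal ideal $\m$ is unique, so every $\alpha\in\Aut S$ satisfies $\alpha(\m)=\m$ and hence $\alpha(\m^i)=\m^i$ for all $i\ge 0$. The induced $\K$-linear maps on the successive quotients $\m^i/\m^{i+1}$ assemble into a graded algebra automorphism $\gr\alpha$ of $\gr S$, multiplicativity being inherited from $\alpha$ together with $\m^i\cdot\m^j\subseteq\m^{i+j}$. Choosing a $\K$-vector space splitting $S=\bigoplus_i U_i$ with $U_i$ lifting $\m^i/\m^{i+1}$ identifies $S$ with $\gr S$ and embeds both $\Aut S$ and $\Aut(\gr S)$ as Zariski-closed subgroups of $\mathrm{GL}(S)$; in these coordinates $\gr\alpha$ is nothing but the block-diagonal part of $\alpha$ with respect to the filtration, so its matrix entries are polynomial in those of $\alpha$. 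Hence $\alpha\mapsto\gr\alpha$ defines a morphism of algebraic groups $\varphi\colon\Aut S\to\Aut(\gr S)$.

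Next I would describe the kernel. An automorphism $\alpha$ lies in $\ker\varphi$ precisely when it acts as the identity on each graded component, i.e.\ $(\alpha-\mathrm{id})(\m^i)\subseteq\m^{i+1}$ for every $i\ge 0$; the case $i=0$ uses only that $\alpha$ fixes $1\in S$. Since $S$ is a finite-dimensional local algebra, the maximal ideal $\m$ is nilpotent, say $\m^N=0$, so iterating the filtration estimate yields $(\alpha-\mathrm{id})^N(S)\subseteq\m^N=0$. Thus $\alpha-\mathrm{id}$ is a nilpotent endomorphism of the finite-dimensional vector space $S$, and $\alpha$ is unipotent as an element of $\mathrm{GL}(S)$. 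Applied to every element of $\ker\varphi$, this shows that $\ker\varphi$ is a unipotent subgroup of $\Aut S$.

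The argument is essentially routine filtration bookkeeping and I do not expect a genuine obstacle. The one point that warrants explicit verification is that $\varphi$ really is a morphism of algebraic groups rather than merely an abstract group homomorphism; this is handled by the splitting observation in the first step, since after identifying $S$ with $\gr S$ as vector spaces the passage to associated graded becomes the polynomial (indeed linear) operation of extracting the block-diagonal part with respect to the filtration.
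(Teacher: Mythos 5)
Your argument is correct and follows essentially the same route as the paper: invariance of the powers $\m^i$ gives the induced map to $\Aut(\gr S)$, and an element of the kernel strictly raises the filtration, hence is unipotent (the paper phrases this as unitriangularity in a basis adapted to the chain $0\subseteq\m^r\subset\ldots\subset\m\subset S$, which is the same observation as your nilpotency of $\alpha-\mathrm{id}$). Your extra remark that $\varphi$ is a morphism of algebraic groups is a harmless refinement the paper leaves implicit.
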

\begin{proof}
The ideals $\m^i$ are invariant under $\Aut S$ for all $i$, since they are powers of the unique maximal ideal. Therefore, $\Aut S$ naturally acts on $\m^i/\m^{i+1}$ for all $i$, hence it acts on $\gr S$. We obtain a natural map $\vphi\colon\Aut S\to\Aut (\gr S)$.

  Take a basis of $S$ which is consistent with the chain of subspaces $0\subseteq\m^r\subset\ldots\subset \m \subset S$.
  Consider an arbitrary operator $g\in\ker \vphi$. Then $g(z)\in z+\m^{i+1}$ for any $z\in\m^i$, and $g$ is represented by a unitriangular matrix in the taken basis. Hence $\ker\vphi$ is unipotent.
\end{proof}
\begin{corollary}\label{graded-nongraded}
 If the unity component $(\Aut(\gr S))^\circ$ is solvable then the unity component $(\Aut S)^\circ$ is solvable as well.
\end{corollary}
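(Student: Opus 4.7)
The plan is to exploit the map $\vphi\colon\Aut S\to\Aut(\gr S)$ provided by Proposition~\ref{unipotent kernel} and appeal to the standard fact that an extension of a solvable group by a solvable normal subgroup is again solvable.

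First I would restrict $\vphi$ to the unity component $(\Aut S)^\circ$. Its image $\vphi((\Aut S)^\circ)$ is connected and contains the identity, so it is contained in $(\Aut(\gr S))^\circ$, which is solvable by hypothesis; hence $\vphi((\Aut S)^\circ)$ is solvable as a closed subgroup of a solvable group. Next I would look at the kernel $K=\ker\vphi\cap(\Aut S)^\circ$: it is a closed subgroup of $\ker\vphi$, and by Proposition~\ref{unipotent kernel} the latter is unipotent, so $K$ is unipotent and in particular solvable.

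Finally I would apply the short exact sequence
\begin{equation*}
  1\to K\to (\Aut S)^\circ\to \vphi((\Aut S)^\circ)\to 1,
\end{equation*}
in which both the kernel and the quotient are solvable. Since the class of solvable algebraic groups is closed under extensions, $(\Aut S)^\circ$ itself is solvable.

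There is no serious obstacle here: the corollary is a direct consequence of Proposition~\ref{unipotent kernel} combined with the two elementary facts that unipotent groups are solvable and that solvability is preserved under group extensions. The only point to be a bit careful about is that $\vphi((\Aut S)^\circ)$ genuinely lies in the unity component of $\Aut(\gr S)$, which follows at once from connectedness.
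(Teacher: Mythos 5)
Your proposal is correct and is exactly the argument the paper intends: the corollary is stated as an immediate consequence of Proposition~\ref{unipotent kernel}, with the unipotent (hence solvable) kernel, the image of the connected component landing in $(\Aut(\gr S))^\circ$, and closure of solvability under extensions. No gaps; your write-up just makes explicit what the paper leaves implicit.
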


\begin{theo}\label{dimk}
  The algebra of derivations $\Der S$ of a narrow algebra $S$ is solvable.
\end{theo}
\begin{proof}
    Let $\Der S$ be non-solvable. Then the deduction above is applicable. Consider a simple $\ssl_2$-submodule $F=\ssl_2\cdot w_1\subseteq I$. It has a zero intersection with the ideal $\mm I$. Let $k$ be the biggest integer such that $F\subseteq \mm^k$. Then $F$ has a zero intersection with $\mm^{k+1}$ as well and the highest weight of $F$ is equal to $k n_1\ge k$. After factorization by $\mm^{k+1}$ it implies that $\dim I_k\ge \dim(\mm I)_k+\dim F>\dim(\mm I)_k+k$, and the algebra $S$ is not narrow.
\end{proof}
\begin{proof}[Proof of Theorem \ref{mycriterion}]
  The desired statement is a direct consequence of Theorem \ref{dimk} and Corollary \ref{graded-nongraded}.
\end{proof}
\begin{remark}
  As a matter of fact, Theorem \ref{mycriterion} can be obtained without Proposition \ref{unipotent kernel}. However, then the proof loses in clarity.
\end{remark}
\begin{example}
Algebras \begin{align}
  A=&\K[x,y]/(x^2, y^3,xy^2),\\
  B=&\K[x,y,z]/(x^3,x^2y,x^2z,y^4,z^4)
\end{align} are extremal and Schulze's criterion is not applicable, but their algebras of derivations are solvable due to Theorem \ref{mycriterion}.
Actually, the complete automorphism groups of $A$ and $B$ are solvable as well. For example, we obtain through a direct calculation
$$\Aut A=\left\{\left.
\begin{aligned}
  \x&\mapsto c_1\x+a_2\x\y+a_3\y^2,\\
  \y&\mapsto c_2\y+a_4 \x+a_5 \x\y+a_6 \y^2
\end{aligned}
\right|a_i\in\K, c_i\in\K^\times\right\}.$$
\end{example}
\begin{example}
  On the contrary, for the algebra
  \begin{equation}
    A=\K[x_1,\ldots,x_n]/(x_1^l,x_2^l,\ldots,x_n^l), \mbox{ where } l<n,
  \end{equation}
  Schulze's criterion holds but the criterion of Theorem \ref{mycriterion} does not. Note that for $n\ge 5$ the group $\Aut A$ is non-solvable, as far as it contains the subgroup of permutations of coordinates.
\end{example}
Thereby, these two criteria have distinct areas of application.

\section{Extremal algebras and Yau's theorem}\label{extremal algebras}
% As above we have $S=R/I$, where $I\lhd R=\K[\![x_1,\ldots,x_n]\!]$ and $\mm^r\subseteq I\subseteq\mm^l$.
Recall that by an extremal algebra we mean the finite-dimensional algebra satisfying the equality $\dim{I/\mm I}=l+n-1$ in terms of Theorem~\ref{schulze}.
\begin{theo}\label{extrem}
 An extremal algebra $S$ has a non-solvable algebra of derivations $\Der S$ if and only if it is of the form $S=S_1\otimes S_2$, where
  \begin{align}
   S_1&\cong\K[\![x_1,x_2]\!]/(x_1^l,x_1^{l-1}x_2,\ldots,x_1x_2^{l-1},x_2^l)\mbox{ for some } l\ge2,\\
   S_2&\cong\K[\![x_3,\ldots,x_{n}]\!]/(w_2,\ldots,w_{n-1}),
 \end{align}
  and $w_i\in \mm^l\cap\K[\![x_3,\ldots,x_{n}]\!]$ form a regular sequence.
\end{theo}
\begin{proof}
  Suppose that $S=S_1\otimes S_2$ as above. Then the group $\GL(\bangle{x_1,x_2})$ may be embedded into $\Aut S_1$, hence the subalgebra $S_1$ carries a natural $\ssl_2$-representation. We suppose this representation to be trivial on $S_2$.

  Vice-versa, let the algebra of derivations $\Der S$ of the local algebra $S=R/I$ be non-solvable. Recall the deduction from Section~\ref{analysis}. Then $S$ is extremal if and only if the equalities hold in the chain of inequalities (\ref{ineqs}). The first equality holds if and only if $W$ contains exactly $k$ simple $\ssl_2$-submodules, and their weights are $l n_1,\ldots,l n_k$. The second inequality holds if and only if $n_1=1,\; n_2=\ldots=n_k=0$.

  Under these circumstances $k=n-1$, and the simple $\ssl_2$-submodules of $\overline{V}$ are $\bangle{\x_1,\x_2}, \bangle{\x_3},\ldots,\bangle{\x_{n}}$.
Then $W_{high}=\bangle{w_1,\ldots,w_{n-1}}$, where $\wt(w_1)=l,\;\wt(w_i)=0$ for $i=2,\ldots,n-1$. We have
  \begin{equation}\label{extreq}
    S=R/(w_1,f\cdot w_1,\ldots,f^l\cdot w_1,w_2,\ldots,w_{n-1}).
  \end{equation}
      Note that the algebra $\ssl_2$ annihilates the series $w_2,\ldots,w_{n-1}$, hence they do not depend on $x_1$ and $x_2$ and belong to $\K[\![x_3,\ldots,x_{n}]\!]\cap\mm^l$.
      Since $w_1$ is the highest vector of weight $l$ it is of the form $x_1^l g$, where $g\in\K[\![x_3,\ldots,x_{n}]\!]$. Then $f^k\cdot w_1=x_1^{l-k}x_2^k g$.

   Taking into accordance $x_1^r\in I$ the equality
   \begin{equation}
     x_1^r=p_0 x_1^l g+ p_1 x_1^{l-1}x_2 g+\ldots+p_l x_2^l g+q_2 w_2+\ldots+q_{n-1}w_{n-1}
   \end{equation}
   holds for certain $p_i,q_j\in R$. If we substitute $x_1$ for $x_2$ %instead of $x_2$
   on the right side of equation, we get
    \begin{equation}
     x_1^r=(\widetilde{p}_0+\ldots+\widetilde{p}_l) x_1^l g+\widetilde{q}_2 w_2+\ldots+\widetilde{q}_{n-1}w_{n-1},
   \end{equation}
    where $\widetilde{p}_i=p_i(x_1,x_1,x_3,\ldots,x_n),\,\widetilde{q}_j=q_j(x_1,x_1,x_3,\ldots,x_n)$. Clearly, the series $\widetilde{q}_j$ and $\widetilde{p}=\sum_{i=0}^l \widetilde{p}_i$ may be assumed homogeneous in $x_1$. Then $\widetilde{p}=x_1^{r-l}\widehat{p}$, $\widetilde{q}_j=x_1^r\widehat{q}_j$, where $\widehat{p},\widehat{q}_j\in\K[\![x_3,\ldots,x_{n}]\!]$. But it implies
   \begin{equation}
    x_1^l=\widehat{p}x_1^l g+ x_1^l\widehat{q}_2 w_2+\ldots+ x_1^l\widehat{q}_{n-1} w_{n-1}.
   \end{equation}

   Thus me can take the $\ssl_2$-submodule $\bangle{x_1^l,x_1^{l-1}x_2,\ldots,x_1x_2^{l-1},x_2^l}$ instead of
    the $\ssl_2$-submodule $\bangle{w_1,f\cdot w_1,\ldots,f^l\cdot w_1}$ in (\ref{extreq}).

   Finally, the algebra $S$ decomposes into the tensor product $S_1\otimes S_2$, where $S_i$ are as required.
\end{proof}

% \begin{example}
%According to Theorem \ref{extrem}  the algebra of derivations of the extremal algebra
%\begin{equation}
%  S=\K[x_1,\ldots,x_n]/(x_1^l,x_1^{l-1}x_2,x_1^{l-2}x_2^2,\ldots,x_2^l;x_3^{l_3},\ldots,x_{n}^{l_{n}}),\mbox{ where } l_i\ge l\ge2,
%\end{equation} is not solvable.
% \end{example}

   Now let us introduce the following well-known technical lemma for power series, e.g. see \cite[Section 11.1]{ArnGV}. % верна ли ссылка?
  For an arbitrary power series $g$ denote by $g_{(k)}$ its $k$th homogeneous component. %of degree $k$.

\begin{lemma}\label{x2+q}
Suppose $p\in\mm^2\setminus\mm^3\subset R$. Then up to the analytical change of coordinates $p=x_1^2+\ldots+x_k^2+q(x_{k+1},\ldots,x_n)$, where $q\in\mm^3\cap\K[\![x_{k+1},\ldots,x_{n}]\!]$.
\end{lemma}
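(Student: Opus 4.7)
My plan is to proceed in two stages: first I would diagonalise the quadratic part of $p$, and then I would iteratively absorb the $x_1,\ldots,x_k$-dependence from the higher-degree terms by completing the square. For the first stage, since $p\in\mm^2\setminus\mm^3$ the homogeneous component $p_{(2)}$ is a nonzero quadratic form of some rank $k\ge 1$, and because $\K$ is algebraically closed of characteristic zero, a linear change of coordinates will bring $p_{(2)}$ to the standard form $x_1^2+\cdots+x_k^2$. After this step I will have $p=x_1^2+\cdots+x_k^2+\tilde p$ with $\tilde p\in\mm^3$.

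For the second stage I will construct, by induction on $d\ge 3$, a formal automorphism $\sigma_d$ of $R$ that fixes $x_{k+1},\ldots,x_n$ and sends $x_i\mapsto x_i+\psi_i^{(d)}$ for $i\le k$, with $\psi_i^{(d)}\in\mm^{d-1}$, chosen so as to kill all degree-$d$ monomials of the current power series that involve any $x_i$ with $i\le k$. Assuming inductively that such monomials of degree strictly less than $d$ have already been eliminated, I can write the current degree-$d$ component as $r_d(x_{k+1},\ldots,x_n)+\sum_{i=1}^k x_i h_i$ with $h_i$ homogeneous of degree $d-1$, and then set $\psi_i^{(d)}=-\tfrac12 h_i$. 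The expansion $(x_i-\tfrac12 h_i)^2=x_i^2-x_ih_i+\tfrac14 h_i^2$ will cancel the cross sum $\sum x_i h_i$ exactly, while every new perturbation (the tails of $\tfrac14 h_i^2$ and the corrections produced by substituting into the higher-order part of the series) lies in $\mm^{d+1}$, since $h_i\in\mm^{d-1}$ and $d\ge 3$. Crucially, $\sigma_d$ acts as the identity modulo $\mm^d$ on any element of $\mm^2$, so the work already done in lower degrees is preserved and the induction closes.

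Finally I will form the composition $\sigma=\cdots\circ\sigma_{d+1}\circ\sigma_d\circ\cdots\circ\sigma_3$; since each $\sigma_d$ differs from the identity only modulo $\mm^{d-1}$, the image $\sigma(x_j)$ of every coordinate stabilises modulo $\mm^N$ after finitely many factors, so $\sigma$ is a well-defined $\mm$-adically continuous $\K$-algebra endomorphism of $R$, and it is invertible because its linear part is the identity. The main obstacle I foresee is precisely this convergence bookkeeping---checking that the iterated substitution does define a bona fide formal change of coordinates rather than a merely symbolic expression, and that at every finite stage the result still has the shape prescribed by the induction. Granting this, in the new coordinates $p=x_1^2+\cdots+x_k^2+q(x_{k+1},\ldots,x_n)$ with $q\in\mm^3\cap\K[\![x_{k+1},\ldots,x_n]\!]$, as required.
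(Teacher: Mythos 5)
Your argument is correct, but it is organized differently from the paper's proof. After the common first step (diagonalizing $p_{(2)}$ to $x_1^2+\cdots+x_k^2$), the paper proceeds by induction on the number of variables: it expands $p$ as a power series in $x_1$ with coefficients $a_i\in\K[\![x_2,\ldots,x_n]\!]$, kills $a_1$ by a substitution $x_1\mapsto x_1+g$ determined degree by degree, normalizes the unit $a_2$ to $1$ via $x_1\mapsto x_1 b$ with $b^2=a_2^{-1}$, then removes the higher powers of $x_1$ by $x_1\mapsto x_1+b_2x_1^2+\cdots$, arriving at $p=x_1^2+a_0(x_2,\ldots,x_n)$ and recursing on $a_0$. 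You instead treat all $k$ Morse variables simultaneously and induct on the degree $d\ge 3$, composing an infinite sequence of near-identity substitutions $x_i\mapsto x_i-\tfrac12 h_i$ and checking $\mm$-adic convergence. Both are completing-the-square arguments; yours avoids extracting the square root of a unit and the per-variable bookkeeping, at the price of the convergence argument for the infinite composition, which you do handle correctly: since $\sigma_d\equiv\mathrm{id}\pmod{\mm^{d-1}}$, each $\sigma_d$ changes any element of $\mm^2$ only modulo $\mm^{d}$, lower-degree normal forms are preserved, the new terms (from $\tfrac14\sum h_i^2$ and from substitution into terms of degree $\ge 3$) lie in $\mm^{d+1}$ because $h_i\in\mm^{d-1}$ and $d\ge3$, and the composition stabilizes modulo every $\mm^N$, giving an automorphism with identity linear part. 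The only implicit choice worth stating is that the splitting of the degree-$d$ component as $r_d(x_{k+1},\ldots,x_n)+\sum_{i\le k}x_ih_i$ is merely one of several possible (assign each monomial containing some $x_i$, $i\le k$, to one such index), but existence is all you need, so this is not a gap.
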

\begin{proof}
The homogeneous component $p_{(2)}$ is a quadratic form, hence it is of the form $x_1^2+\ldots+x_k^2$ up to the linear change of coordinates.
 Consider a decomposition in $x_1$ as follows, 
 \begin{equation}
 p=a_0+a_1 x_1+a_2 x_1^2+\ldots,
 \end{equation}
  where $a_i\in\K[\![x_2,\ldots,x_n]\!]$ and $a_2$ is invertible.

 Now consider a change of coordinates $\varphi:x_1\mapsto x_1+g,\,x_2\mapsto x_2,\,\ldots,\,x_n\mapsto x_n$, where $g\in \mm\cap\K[\![x_2,\ldots,x_n]\!]$.
Thus,
 \begin{equation}
   \varphi(p)=\widetilde{a}_0+\widetilde{a}_1 x_1+\widetilde{a}_2 x_1^2+\ldots
 \end{equation}
 for some $\widetilde{a}_i\in\K[\![x_2,\ldots,x_n]\!]$. In this case
 \begin{equation}\label{a_1}
   \widetilde{a}_1=a_1+2 g a_2+3g^2a_3+\ldots=a_2(\frac{a_1}{a_2}+2g+3g^2\frac{a_3}{a_2}+\ldots).
 \end{equation}
 Since $a_1\in\mm$, we can choose such $g$ that $\widetilde{a}_1=0$. Indeed, let us divide a series in parentheses on the right side of equation (\ref{a_1}) into homogeneous components. They are of the form $2g_{(k)}+P_k$, where $P_k$ depends only on first $k-1$ homogeneous components of $g$. Thus, by induction all the homogeneous components of the series $g$ are uniquely defined. Note that $\widetilde{a}_2=a_2+3a_3g+ 6a_4 g^2+\ldots$ is still invertible.

 Therefore we may suppose that $a_1=0$. Consider a change $x_1\mapsto x_1 b$ where $b^2=a_2^{-1}$ (other coordinates are untouched). It is easy to see that the required series $b$ exists. It is a change of coordinates since $b$ is invertible. So, we may as well suppose $a_2=1$.

 Finally, consider a change $x_1\mapsto x_1+b_2 x_1^2+b_3 x_1^3+\ldots$. Similarly to the choice of $g$, we may take such $b_i$ that $p\mapsto a_0+x_1^2$. By induction by $n$ we may suppose that $a_0(x_2,\ldots,x_n)$ is of a required form. Then $p$ is of a required form as well.
\end{proof}

%\begin{lemma}\label{tail}
%     Suppose that a series $p$ defines the IHS $H=(\{p=0\},0)$. Then there exists a polynomial $q$ such that the ideals $(p,J(p))$ and $(q,J(q))$ of the algebra of formal power series coincide and the polynomial $q$ defines the IHS that is isomorphic to $H$.
%\end{lemma}
%\begin{proof}
%    There holds $\mm^r\subseteq I\coloneq(p,J(p))$ for some $r$. Consider a polynomial $q$ that is constructed from $p$ by cutting off the terms of  degree exceeding $r+1$. Then $J(p-q)\subseteq\mm^{r+1}$.
%  By $\mm^{r+1}\subseteq \mm I$ we get
%  \begin{equation}
%    I=\bangle{p,J(p)}+\mm I=\bangle{q,J(q)}+\mm I.
%  \end{equation}
%  Thus, by Proposition \ref{genW} the ideal $(q,J(q))$ equals $I$. Therefore, the moduli algebras of the IHS $(\{p=0\},0)$ and $(\{q=0\},0)$ coincide, hence according to \cite{Mather-Yau} these IHS are biholomorphically equivalent.
%\end{proof}

\begin{proof}[Proof of Theorem \ref{yau}]
According to Yau's remark in \cite{Yau} we may suppose $p\in\mm^3$. Indeed, let $p\in\mm^2\setminus\mm^3$. By Lemma \ref{x2+q} we have  $p=x_1^2+\ldots+x_k^2+q(x_{k+1},\ldots,x_n)$ up to the change of coordinates. Since $\frac{\partial p}{\partial x_i}=2x_i$, $i=1,\ldots,k$, the equality $\K[\![x_1,\ldots,x_n]\!]/(p,J(p))\cong\K[\![x_{k+1},\ldots,x_n]\!]/(q,J(q))$ holds, and we may take a series $q\in\mm^3\cap\K[\![x_{k+1},\ldots,x_n]\!]$ instead of $p$. %Iteratively we obtain the required statement. In addition, by Lemma \ref{tail} we may suppose that $p$ is a polynomial.

If $p\in\mm^3$ then $I=(p,J(p))\subset\mm^2$ and $l\ge 2$. We should prove that the Yau algebra $\Der S$ of the moduli algebra $S=\K[\![x_1,\ldots,x_n]\!]/I$ is solvable. Assume the contrary.

Note that $\dim(I/\mm I)\le n+1\le n+l-1$. Hence the moduli algebra either satisfy the inequality of Schulze's criterion and $\Der S$ is solvable, or it is extremal and $l=2$. In the latter case Theorem \ref{extrem} may be applied and $S=S_1\otimes S_2$, where $S_1=\K[\![x_1,x_2]\!]/(x_1^2,x_1x_2,x_2^2)$ and $S_2=\K[\![x_3,\ldots,x_{n}]\!]/(w_2,\ldots,w_{n-1})$.

 It is easy to see that the homogeneous component $p_{(3)}$ has a form $p_1(x_1,x_2)+p_2(x_3,\ldots,x_{n})$, since otherwise $J(p)$ would contain a series with a term of the form $x_ix_j$, where $i\in\{1,2\},j\in\{3,\ldots,{n}\}$.
Therefore
\begin{equation}
  \bangle{x_1^2,x_1x_2,x_2^2}\subseteq(p,J(p))\cap(\K[x_1,x_2])_2\subseteq\left\langle\frac{\partial p_1}{\partial x_1},\frac{\partial p_1}{\partial x_2}\right\rangle,
\end{equation}
a contradiction.
\end{proof}

\section{The global case and Halperin's conjecture}\label{Halperin_section}
Let the algebra $S=\K[x_1,\ldots,x_n]/I$ be a finite-dimensional, not necessarily local algebra. Suppose that it contains $s$ maximal ideals $\m_1,\ldots,\m_s$.
Then there exists an integer $k\in\NN$ such that $\prod_{i=1}^s\m_i^k=0$. Since the ideals $\m_i^k$ are coprime, the equation $\bigcap_{i=1}^s\m_i^k=\prod_{i=1}^s\m_i^k$ holds. Finally, by \cite[Theorem 8.7]{AtiM} the algebra $S$ can be decomposed into a direct product of local subalgebras as follows,
\begin{equation}\label{local-decompose1}
  S\cong\prod_{i=1}^s S/\m_i^k.
\end{equation}
In addition, any decomposition of $S$ into the local subalgebras is of the form (\ref{local-decompose1}), i.e. they are uniquely determined up to isomorphism.

 On the other hand, there is a unique maximal decomposition
 \begin{equation}\label{unity-decompose}
   1=e_1+\ldots+e_t
 \end{equation} of the unity into a sum of orthogonal idempotents, i.e. $e_ie_i=e_i$ for all $i$ and $e_ie_j=0$ for $i\neq j$; e.g. see \cite[Section II.5]{Artin}. Then we have the decomposition
 \begin{equation}\label{local-decompose2}
   S=\bigoplus_{i=1}^t e_i S,
 \end{equation} where the subalgebras $e_i S$ are indecomposable and hence local. It means that
  \begin{equation}
    S_i=e_i S\cong S/\m_i^k
  \end{equation} up to the permutation of indexes and that $t=s$.
Thus, we have a uniquely determined decomposition (\ref{local-decompose2}) of $S$ into the local subalgebras.

 \begin{proposition}\label{aut decomposition}
   $(\Aut S)^\circ=(\Aut S_1)^\circ\times\ldots\times(\Aut S_s)^\circ.$
 \end{proposition}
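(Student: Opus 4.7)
The plan is to exploit the uniqueness of the decomposition $S=\bigoplus e_i S$ into indecomposable (hence local) components: any automorphism of $S$ must permute the $S_i$, and since the group of permutations is finite, the connected component of the identity cannot realize any non-trivial permutation.

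First I would construct a natural map $\pi\colon\Aut S\to\mathrm{Sym}(s)$ as follows. Given $\varphi\in\Aut S$, the elements $\varphi(e_1),\ldots,\varphi(e_s)$ again form a complete system of orthogonal primitive idempotents of $S$ (primitivity is preserved because $\varphi$ is an algebra automorphism and $e_iS$ is indecomposable iff so is $\varphi(e_i)\varphi(S)=\varphi(e_i)S$). By the uniqueness of the decomposition (\ref{unity-decompose}), there is a unique permutation $\sigma\in\mathrm{Sym}(s)$ with $\varphi(e_i)=e_{\sigma(i)}$, and consequently $\varphi(S_i)=S_{\sigma(i)}$. Setting $\pi(\varphi)=\sigma$ gives a group homomorphism.

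Next I would identify $\ker\pi$ with $\prod_{i=1}^s\Aut S_i$. An automorphism $\varphi\in\ker\pi$ preserves each $S_i=e_iS$ and thus restricts to $\varphi_i\in\Aut S_i$; conversely, any tuple $(\varphi_1,\ldots,\varphi_s)$ of componentwise automorphisms assembles, via the direct product decomposition (\ref{local-decompose2}), into a single element of $\Aut S$ acting trivially on each idempotent. This correspondence is an isomorphism of algebraic groups $\ker\pi\cong\prod_{i=1}^s\Aut S_i$.

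Finally, since $\mathrm{Sym}(s)$ is finite, the connected set $(\Aut S)^\circ$ must map to the identity under $\pi$, so $(\Aut S)^\circ\subseteq\ker\pi\cong\prod_{i=1}^s\Aut S_i$. Taking the connected component on both sides and using that the unity component of a direct product is the product of unity components yields $(\Aut S)^\circ\subseteq\prod_{i=1}^s(\Aut S_i)^\circ$. The reverse inclusion is immediate: $\prod_{i=1}^s(\Aut S_i)^\circ$ is a connected subgroup of $\Aut S$ containing the identity, hence is contained in $(\Aut S)^\circ$.

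The one step that deserves care is the claim that $\pi$ is well-defined and really sends $\varphi(e_i)$ to some $e_{\sigma(i)}$ rather than a scrambled linear combination; this relies crucially on the uniqueness (up to permutation) of the primitive-idempotent decomposition established just before the proposition. Once that is in hand, the rest is bookkeeping.
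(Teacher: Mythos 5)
Your proof is correct and follows essentially the same route as the paper: the paper's (much terser) argument likewise invokes the uniqueness of the decomposition of $1$ into primitive orthogonal idempotents to conclude that $(\Aut S)^\circ$ fixes each $e_i$, hence each $S_i$, and then uses $S_iS_j=0$ to split the automorphisms componentwise. You merely make explicit the permutation homomorphism and the connectedness argument that the paper leaves implicit.
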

\begin{proof}
  Since the unity $1$ of algebra $S$ and its decomposition (\ref{unity-decompose}) are unique, the primary idempotents $e_i$ in this decomposition are preserved by $(\Aut S)^\circ$ as well as the subalgebras $S_i$. Since $S_iS_j=0$ for $i\neq j$, the required statement holds.
\end{proof}

   Denote by $\mm_i=(x_1-a_{1i},\ldots,x_n-a_{ni}),\,i=1\ldots s,$ the maximal ideals in $\K[x_1,\ldots,x_n]$ corresponding to $\m_1,\ldots,\m_s\lhd S$. Then $\bigcap_{i=1}^s\mm_i^k\subset I$. Let us introduce the corresponding algebras of formal power series $R_i=\K[\![x_1-a_{1i},\ldots,x_n-a_{ni}]\!]$. %and ideals $I=(I)\lhd R_i$.
%So, $\prod_{i=1}^s\mm_i^k\subset(f_1,\ldots,f_n)$.
 The ideal $(\mm_j)\lhd R_i$ coincides with $R_i$ unless $i=j$. In latter case $(\m_i)\lhd R_i$ is the maximal ideal which we denote by $\widetilde{\mm}_i$. Therefore, the inclusion $\widetilde{\mm}_i^k=\bigcap_{i=1}^s\left(\mm_i^k\right)\subset (I)\lhd R_i$ holds, and
 \begin{equation}
   S_i\cong S/\m_i^k\cong \K[x_1,\ldots,x_n]/(I,\mm_i^k)\cong R_i/(I).
 \end{equation}
 Taking into account Proposition~\ref{aut decomposition}, we obtain the following solvability criterion.
\begin{theo}\label{global criterion}
  The unity component $(\Aut S)^\circ$ for the finite-dimensional algebra $S=\K[x_1,\ldots,x_n]/I$ with maximal ideals $\m_1,\ldots,\m_s$ is solvable if and only if the unity component $(\Aut S_i)^\circ$ for the local algebra $S_i=R_i/(I)$ is solvable for each $i=1\ldots s$.
\end{theo}
Now we can prove Halperin's conjecture.
\begin{proof}[Proof of Conjecture \ref{Halperin}]
Suppose $S=\K[x_1,\ldots,x_n]/(f_1,\ldots,f_n)$.
 Since the local subalgebra $S_i=R_i/(f_1,\ldots,f_n)$ is a local complete intersection, the group $(\Aut S_i)^\circ$ is solvable by Corollary~\ref{intersect} for each $i$. Then by Theorem~\ref{global criterion} the group $(\Aut S)^\circ$ is solvable as well.
\end{proof}

Theorem~\ref{global criterion} allows us to deduce the following globalization of Schulze's criterion.
\begin{corollary}\label{schulze global}
%  Given a finite-dimensional algebra $S=\K[x_1,\ldots,x_n]/I$ and an integer $l>1$, suppose that for any maximal ideal $\mm\subset\K[x_1,\ldots,x_n]$ either $I\nsubseteq\mm$ or $I\subset\mm^l$. Then the unity component $(\Aut S)^\circ$ is solvable.

Suppose the ideal $I\subset \K[x_1,\ldots,x_n]$ with $m$ generators and an integer $l>1$ be such that the following holds.
\begin{itemize}
  \item The quotient algebra $S=\K[x_1,\ldots,x_n]/I$ is finite-dimensional.
  \item For any maximal ideal $\mm\subset\K[x_1,\ldots,x_n]$ there holds either $I\nsubseteq\mm$ or $I\subset\mm^l$.
  \item An inequality $m<n+l-1$ holds.
\end{itemize}
Then the unity component $(\Aut S)^\circ$ is solvable.
\end{corollary}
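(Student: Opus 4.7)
The plan is to reduce to the local Schulze criterion (Theorem~\ref{schulze}) via the globalization Theorem~\ref{global criterion}. By that theorem, $(\Aut S)^\circ$ is solvable if and only if $(\Aut S_i)^\circ$ is solvable for each local factor $S_i=R_i/(I)$, one for each maximal ideal $\mm_i\lhd\K[x_1,\ldots,x_n]$ containing $I$. So it suffices to verify, for each such $\mm_i$, that the local algebra $S_i$ satisfies Schulze's inequality.

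Next I would verify the two hypotheses of Theorem~\ref{schulze} for $(I)\lhd R_i$. First, since $\mm_i$ is one of the maximal ideals containing $I$, by assumption $I\subset\mm_i^l$; extending to the completion gives $(I)\subset\widetilde{\mm}_i^l$, so the condition $I\subseteq\mm^l$ of Theorem~\ref{schulze} holds with the same $l$ in the local ring $R_i$. Second, the ideal $(I)\lhd R_i$ is generated by the same $m$ generators of $I$, so by Corollary~\ref{generators} the minimal number of generators of $(I)$, namely $\dim((I)/\widetilde{\mm}_i(I))$, is at most $m$. The hypothesis $m<n+l-1$ then yields
\begin{equation}
  \dim\bigl((I)/\widetilde{\mm}_i(I)\bigr)\le m<n+l-1.
\end{equation}

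Applying Theorem~\ref{schulze} gives that $\Der S_i$ is solvable, hence $(\Aut S_i)^\circ$ is solvable, for every $i$. Theorem~\ref{global criterion} then concludes the proof.

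There is no serious obstacle here: the only subtlety is to notice that maximal ideals $\mm$ not containing $I$ contribute nothing to the decomposition (\ref{local-decompose2}) of $S$, so the second bullet in the hypothesis only needs to be imposed on those $\mm$ that actually give rise to local factors; and to observe that passing from $I\lhd\K[x_1,\ldots,x_n]$ to $(I)\lhd R_i$ cannot increase the minimal number of generators, so the inequality $m<n+l-1$ on the global generator count transfers directly to the local Schulze inequality.
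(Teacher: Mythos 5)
Your proposal is correct and follows essentially the same route as the paper: reduce to the local factors $S_i=R_i/(I)$ via Theorem~\ref{global criterion}, bound $\dim\bigl((I)/\widetilde{\mm}_i(I)\bigr)\le m<n+l-1$ using Corollary~\ref{generators}, and apply Theorem~\ref{schulze} to each local factor. You merely spell out a couple of verifications (that $I\subset\mm_i^l$ passes to the completion, and that the generator count cannot increase) that the paper leaves implicit.
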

\begin{proof}
  Let $\mm_1,\ldots,\mm_s\lhd \K[x_1,\ldots,x_n]$ be the only ideals containing $I$, as above. By Corollary~\ref{generators} there holds $\dim(I/\mm_i I)\le m < n+l-1$ and Schulze's criterion is applicable for the algebras $R_i/(I)$, $i=1,\ldots,s$. Therefore $(\Aut S)^\circ$ is solvable by Theorem~\ref{global criterion}.
%   By Theorem~\ref{global criterion} it is sufficient to prove that Schulze's criterion holds for the algebras $R_i/(I)$, $i=1,\ldots,s$. But $\dim(I/\mm_i I)\le m < n+l-1$ by Corollary~\ref{generators}. %the minimal number of generators of $(I)\lhd R_i$ equals $\dim(I/\mm_i I)$, hence $\dim(I/\mm_i I)\le m < n+l-1$ and Schulze's criterion is applicable. % REDO style
\end{proof}

\section{Automorphism subgroups and dimension bounds} As usual we assume that the ideal $I\lhd R$ contains $\mm^l$, where $l\ge 2$, and the algebra $S=R/I$ is finite-dimensional and local with the maximal ideal $\m=(\x_1,\ldots,\x_n)$.

Recall that the sum of all minimal ideals of a finite-dimensional algebra $S$ is called a \emph{socle} $\Soc S$. It is invariant under endomorphisms of $S$.
   An \emph{annihilator} of an arbitrary subset $X\subset S$ is the ideal $\Ann X=\{z\in S\;|\; zX=0\}$.
 \begin{lemma}\label{ann}
   $\Soc S= \Ann \m.$
 \end{lemma}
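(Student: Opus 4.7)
The plan is to show the two inclusions separately, using Nakayama's lemma for one direction and the decomposition $S = \K\cdot 1 \oplus \m$ (which holds since $S$ is local with residue field $\K$) for the other.

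First I would prove $\Ann\m \subseteq \Soc S$. Take any nonzero $z \in \Ann\m$ and consider the principal ideal $(z) = Sz$. Writing an arbitrary $s \in S$ as $s = c + m$ with $c \in \K$ and $m \in \m$, we get $sz = cz + mz = cz$, so $(z) = \K z$ is one-dimensional. A one-dimensional ideal is automatically minimal, so $z \in \Soc S$.

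Next I would show $\Soc S \subseteq \Ann\m$. Let $J \lhd S$ be an arbitrary minimal ideal. Then $\m J$ is an ideal contained in $J$, so by minimality either $\m J = J$ or $\m J = 0$. The first option is excluded by Nakayama's lemma (applied to the finite-dimensional, hence finitely generated, $S$-module $J$, using that $\m$ is the Jacobson radical of the local algebra $S$), so $\m J = 0$, i.e. $J \subseteq \Ann\m$. Summing over all minimal ideals yields $\Soc S \subseteq \Ann\m$.

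There is no real obstacle here; the only subtle point is making sure Nakayama's lemma is applied in the correct setting, which is immediate because $S$ is a finite-dimensional local algebra and $J$ is a finite-dimensional $S$-submodule.
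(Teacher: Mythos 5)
Your proof is correct and follows essentially the same route as the paper: Nakayama's lemma forces $\m J=0$ for every minimal ideal $J$, giving $\Soc S\subseteq\Ann\m$, and the decomposition $S=\K\oplus\m$ shows every nonzero $z\in\Ann\m$ generates a one-dimensional, hence minimal, ideal. No differences worth noting.
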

\begin{proof}
  Consider an arbitrary minimal ideal $J\subseteq \Soc S$. Obviously, $\m J\subseteq J$. But $\m J\neq J$ by Nakayama's Lemma. %, otherwise $J=\m J=\ldots=\m^r J=0$.
  Thus, $\m J=0$ and $\Soc S\subseteq \Ann \m$.

  Suppose $z\in\Ann \m$. Then $zS=\{z(c+w)\;|\; c\in\K,w\in\m\}=\{cz\;|\;c\in\K\}$, so the principal ideal $(z)$ is one-dimensional and minimal. It implies $\Ann\m\subseteq\Soc S$.
\end{proof}

   Assuming that $S$ is graded, Y.-J. Xu and S. S.-T. Yau found a dimension bound for the group $\Aut S$ as follows,
   \begin{equation}
     \dim \Aut S\ge \dim S-\dim\Soc S,
   \end{equation}
   see \cite[Proposition 2.3]{Xu-Yau}.
  In Theorem~\ref{dum aut} we introduce a lower bound without this assumption. % for the dimension of the group $\Aut S$. %Note that the corresponding unipotent subgroup may not exist.
\begin{definition}
   Let us call a \emph{lower socle} of the algebra $S$ the ideal $\LSoc S=\Soc S\cap \m^2$. We may choose a subspace $\USoc S\subseteq\Soc S$ such that
   \begin{equation}
      \Soc S=\USoc S\oplus\LSoc S.
   \end{equation} Let us call it an \emph{upper socle}. Note that the choice of the upper socle is not canonical.
However, up to the change of coordinates we may suppose $\USoc S\subseteq\bangle{\x_1,\ldots,\x_n}$.
 \end{definition}
\begin{proposition}\label{Udim}
  The automorphism group of a finite-dimensional local algebra $S$ contains a unipotent subgroup $U\subseteq \Aut S$ with
 \begin{multline}
%  \dim U= \dim (\m/\m^2)\cdot\dim(\Soc S\cap\m^2)+(\dim (\m/\m^2)-(\dim(\Soc S)-\dim(\Soc S\cap\m^2)))\cdot(\dim(\Soc S)-\dim(\Soc S\cap\m^2)).
    \dim U=\dim(\LSoc S)\cdot\dim(\m/\m^2) + \dim(\USoc S)\cdot(\dim(\m/\m^2)-\dim(\USoc S)).
 \end{multline}
\end{proposition}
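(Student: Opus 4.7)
The plan is to exhibit $U$ explicitly as a group of ``socle-valued translations'' of the chosen generators. Write $a=\dim\USoc S$, $b=\dim\LSoc S$, $d=\dim(\m/\m^2)$. Since $I\subseteq\mm^2$, we may assume (after a change of generators) that $\x_1,\ldots,\x_n$ form a minimal system with $n=d$ and that $\USoc S=\bangle{\x_{n-a+1},\ldots,\x_n}$. I then define
\[U=\bigl\{\vphi\colon\x_i\mapsto\x_i+z_i\bigm| z_i\in\Soc S,\ \text{with }z_i\in\LSoc S\text{ for }i>n-a\bigr\}.\]

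The first step is to show every such $\vphi$ is a well-defined unipotent automorphism. By Lemma~\ref{ann}, $\Soc S=\Ann\m$, so $z_i\x_j=z_iz_j=0$ for all $i,j$; thus the expansion of $(\x_{i_1}+z_{i_1})\cdots(\x_{i_k}+z_{i_k})$ collapses to $\x_{i_1}\cdots\x_{i_k}$ for $k\ge 2$, whence $f(\x+z)=f(\x)=0$ for every $f\in I\subseteq\mm^2$. So $\vphi$ descends to an endomorphism of $S$ which acts as the identity on $\K$ and on $\m^2$ and as the block-unitriangular matrix $\bigl(\begin{smallmatrix}I_{n-a}&0\\C&I_a\end{smallmatrix}\bigr)$ on $\m/\m^2$; consequently $(\vphi-\mathrm{id})^3=0$, so $\vphi\in\Aut S$ is unipotent.

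The second step is to verify that $U$ is a subgroup and has the predicted dimension. The parameters $(z_i)$ are clearly independent and determine $\vphi$, so modulo closure one obtains
\[\dim U=(n-a)\dim\Soc S+a\dim\LSoc S=(n-a)(a+b)+ab=bd+a(d-a),\]
matching the claim. For closure I will use that both $\Soc S$ and $\LSoc S=\Soc S\cap\m^2$ are invariant under \emph{every} automorphism of $S$, while each $\vphi_2\in U$ fixes $\m^2\supseteq\LSoc S$ pointwise; then $(\vphi_2\vphi_1)(\x_i)=\x_i+z_i^{(2)}+\vphi_2(z_i^{(1)})$ lies in $\x_i+\Soc S$ for $i\le n-a$ and in $\x_i+\LSoc S$ for $i>n-a$, and inversion follows by applying the same reasoning to $\vphi^{-1}=\mathrm{id}-(\vphi-\mathrm{id})+(\vphi-\mathrm{id})^2$. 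I expect this closure check to be the main obstacle: the complement $\USoc S$ itself is \emph{not} preserved by $U$, so the defining constraint for $i>n-a$ must be phrased in terms of the canonical subspace $\LSoc S$ rather than a fixed complement to it.
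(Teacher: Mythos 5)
Your construction is essentially the paper's: you translate a minimal system of generators by socle elements, restricting the translations of the upper-socle generators to the lower socle, and show multiplicativity via $\Soc S=\Ann\m$ (Lemma~\ref{ann}), exactly as in the paper's proof. Your explicit closure/inversion check and your dimension count $(n-a)(a+b)+ab=bn+a(n-a)$ are correct and indeed match the stated formula (the paper leaves the subgroup verification implicit), so the proposal is correct and takes the same route.
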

\begin{proof}
Suppose that $\USoc S=\bangle{\x_1,\ldots,\x_s}$.
Consider the unipotent subgroup of linear transformations
\begin{multline}
  U=\{u\colon \x_1\mapsto\x_1+F_1,\ldots,\x_n\mapsto\x_n+F_n\,|\\
  \,F_1,\ldots,F_s\in\LSoc S,\,F_{s+1},\ldots,F_n\in\Soc S\}\subset\GL(S),
\end{multline}
acting trivially on a subspace $\bangle{1}\oplus\m^2$.
%The affine subspace $U$ in $\GL(S)$ forms an abelian group relatively to composition. The unity is the element $(F_1=0,\ldots,F_n=0)$, an inverse to $(F_1,\ldots,F_n)$ is $(-F_1,,\ldots,-F_n)$.
It is easy to see that
\begin{equation}
  u(\x_i) u(\x_j)=(\x_i+F_i)(\x_j+F_j)=\x_i\x_j=u(\x_i\x_j)\mbox{ for }i,j\in\{1,\ldots,n\}, u\in U.
\end{equation}
Hence
\begin{equation}
  u(a) u(b)=ab=u(ab)\mbox{ for }a,b\in\m, u\in U.
\end{equation}
Therefore the $U$-action is consistent with the multiplication in $S$, so $U\subseteq\Aut S$.
Finally,
\begin{multline}
  \dim U= s\cdot\dim(\Soc S)+ (n-s)\cdot\dim(\LSoc S)=\\
  s\cdot\dim(\USoc S)+ n\cdot\dim(\LSoc S).
\end{multline}
\end{proof}
\begin{theo}\label{dum aut}
    $\dim\Aut S\ge \dim (\m/\m^2)\cdot\dim\Soc S.$
\end{theo}
\begin{proof}
Consider a subgroup $G=\GL(\USoc S)\subseteq\Aut S$. Along with the subgroup $U$ from Proposition \ref{Udim} it generates a subgroup $G U\subseteq\Aut S$. To prove the inequality
\begin{equation}
  \dim G U\ge\dim G+\dim U
\end{equation} it suffices to look at the tangent algebras of $G$ and $U$. Indeed, easy to see that they have a zero intersection, and% $\Lie(G U)=\Lie G\oplus\Lie U$ as a vector space. Hence
\begin{multline}
  \dim GU=\dim(\Lie GU)\ge\dim(\Lie G)+\dim(\Lie U)=\\
  \dim G+\dim U=(\dim(\USoc S))^2+\dim(\LSoc S)\cdot\dim(\m/\m^2) + \\
  \dim(\USoc S)\cdot(\dim(\m/\m^2)-\dim(\USoc S))=\dim(\Soc S)\cdot\dim(\m/\m^2).
\end{multline}
\end{proof}
\begin{corollary}
  The group $\Aut S$ is infinite if $S\neq\K$.
\end{corollary}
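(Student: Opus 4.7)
The plan is to show that both factors in the inequality of Theorem~\ref{dum aut} are strictly positive whenever $S\neq\K$, so that $\dim\Aut S\ge 1$ and hence $\Aut S$, being an affine algebraic group of positive dimension, is infinite.

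First I would observe that since $S$ is a finite-dimensional local algebra with maximal ideal $\m$, the condition $S\neq\K$ is equivalent to $\m\neq 0$. Applying Nakayama's Lemma to the finitely generated (in fact finite-dimensional) module $\m$ immediately yields $\m\neq\m^2$, so $\dim(\m/\m^2)\ge 1$.

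Next I would verify that $\Soc S\neq 0$. Because $S$ is finite-dimensional and local, the maximal ideal $\m$ is nilpotent: some power $\m^r$ vanishes, and there exists a largest $k$ with $\m^k\neq 0$. Then $\m\cdot\m^k=\m^{k+1}=0$, so $\m^k\subseteq\Ann\m$, and by Lemma~\ref{ann} this gives $0\neq\m^k\subseteq\Soc S$, hence $\dim\Soc S\ge 1$.

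Combining the two bounds through Theorem~\ref{dum aut} yields $\dim\Aut S\ge\dim(\m/\m^2)\cdot\dim\Soc S\ge 1$. Since an affine algebraic group of positive dimension over an algebraically closed field has infinitely many points, the conclusion follows. There is no real obstacle here: the content of the corollary is the bound already established in Theorem~\ref{dum aut}, and the only thing to check is the nonvanishing of the two factors, both of which are standard consequences of Nakayama's Lemma and the nilpotence of $\m$.
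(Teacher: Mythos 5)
Your proposal is correct and is exactly the argument the paper intends: the corollary is stated as an immediate consequence of Theorem~\ref{dum aut}, and your verification that $\dim(\m/\m^2)\ge 1$ (Nakayama) and $\Soc S\neq 0$ (nilpotence of $\m$ plus Lemma~\ref{ann}) supplies the only details left implicit. No gaps.
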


Clearly, the automorphism group almost always contains a rather big unipotent subgroup. A natural question arises if the whole automorphism group may be unipotent. The following proposition provides an example.
\begin{proposition}
    Consider the following local algebra
    \begin{equation}
      S=\K[x,y]/I,\quad I=(y^5,(x+y)^6,x^5-x^3y^3, x^4y).
    \end{equation}
Then the group $\Aut S$ is unipotent.
\end{proposition}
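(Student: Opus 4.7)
The plan is a kernel-image decomposition in the spirit of Proposition~\ref{unipotent kernel}. The subgroup $K\subseteq\Aut S$ of automorphisms inducing the identity on $\m/\m^2$ is unipotent: by induction on $d$, any $\varphi\in K$ satisfies $\varphi(z)\in z+\m^{d+1}$ for every $z\in\m^d$, so $\varphi$ is unitriangular in a basis adapted to the $\m$-adic filtration. It therefore suffices to show that every $\varphi\in\Aut S$ acts trivially on $\m/\m^2$; write the linear part as $\x\mapsto a\x+b\y$, $\y\mapsto c\x+d\y$ modulo $\m^2$ with $ad-bc\ne0$, and aim for $a=d=1$ and $b=c=0$.

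I would first compute $\gr S$ explicitly. From $y^5=0$, $x^4y=0$, and the initial form $x^5$ of $x^5-x^3y^3$ one gets $\x^5=\x^4\y=\y^5=0$, so $\gr S_5=\bangle{\x^3\y^2,\x^2\y^3,\x\y^4}$ is three-dimensional. In degree~$6$ only $\x^3\y^3$ and $\x^2\y^4$ survive, and reducing $(\x+\y)^6$ against the degree-$5$ relations yields $20\x^3\y^3+15\x^2\y^4=0$, so $\gr S_6=\K\cdot\x^3\y^3$ is one-dimensional with $\x^2\y^4=-\tfrac{4}{3}\x^3\y^3$, while further multiplications show $\gr S_{k}=0$ for $k\ge 7$.

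Next, applying $\varphi$ to the generators of $I$ and reading off the leading graded parts: the vanishing of $(c\x+d\y)^5$ in $\gr S_5$ (from $y^5=0$) forces $cd=0$; symmetrically $ab=0$ (from the initial form $x^5$); with $ad-bc\ne0$ only the diagonal and anti-diagonal cases survive, and the vanishing of $(a\x+b\y)^4(c\x+d\y)$ in $\gr S_5$ (from $x^4y=0$) rules out the anti-diagonal case, so $b=c=0$. Then $(\x+\y)^6=0$ in $\gr S_6$ becomes $20a^2d^3(a-d)\x^3\y^3=0$, forcing $a=d$: the linear part is a scalar $aI$.

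To rule out $a\ne1$ one must use the inhomogeneous generator $x^5-x^3y^3$. Writing $\varphi(x)=a\x+u$, $\varphi(y)=a\y+v$ with $u,v\in\m^2$ and expanding, every cross term involving $u$ or $v$ lies in $\m^7=0$, so
\[\varphi(x^5-x^3y^3)=a^5\x^5-a^6\x^3\y^3=a^5(1-a)\x^3\y^3,\]
where $\x^5=\x^3\y^3$ in $S$. Since $\x^3\y^3\ne0$ this forces $a=1$. I expect the main technical obstacle to be the careful bookkeeping in this last step: one must verify that $\m^7=0$ in $S$ (which follows from the $\gr S$ computation) and that every cross term in $\varphi(x)^5$ and $\varphi(x)^3\varphi(y)^3$ indeed reaches degree~$\ge 7$, using the $S$-relations $\x^6=\x^5\y=\x^4\y^2=0$ to dispose of the lone potentially-surviving summand $5a^4\x^4u$ in $\varphi(x)^5$.
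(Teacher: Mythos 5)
Your proposal is correct and follows essentially the same route as the paper: it uses the same structure constants of $S$ (the relations $\x^4\y=\y^5=0$, $\x^5=\x^3\y^3$, $\x^2\y^4=-\tfrac43\x^3\y^3$, $\m^7=0$), forces the linear part to be diagonal via the degree-$5$ relations, equal-diagonal via $(\x+\y)^6$, and scalar $1$ via the inhomogeneous relation $\x^5-\x^3\y^3$, then concludes unipotence from triviality on $\m/\m^2$ exactly as in the paper's filtration argument. Your phrasing through $\gr S$ (getting $ab=0$, $cd=0$ and excluding the anti-diagonal case with $x^4y$) is only a cosmetic repackaging of the paper's ``only linear form with fifth power zero / in $\m^6$'' observations.
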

\begin{proof}
  The Gr\"obner basis of the ideal $I$ with respect to the homogeneous lexicographic order with $x\prec y$ is
  \begin{equation}
    \{x^6,y^5,x^3y^3-x^5,3x^2y^4+4x^5,x^4y\}.
  \end{equation}

  Clearly, $\mm^7\subset I$.
  Let $\x,\y$ be the images of $x,y$ respectively under factorization by $I$. The basis of algebra $S$ is as follows.

  \begin{tabular}{cccccc}\label{table}
    $1$&$\x$&$\x^2$&$\x^3$&$\x^4$&$(\x^5)$\\
    \vspace{3pt}
    $\y$&$\x\y$&$\x^2\y$&$\x^3\y$&&\\
    \vspace{3pt}
    $\y^2$&$\x\y^2$&$\x^2\y^2$&$\x^3\y^2$&&\\
    \vspace{3pt}
    $\y^3$&$\x\y^3$&$\x^2\y^3$&$(\x^3\y^3)$&&\\
    \vspace{3pt}
    $\y^4$&$\x\y^4$&$(\x^2\y^4)$&&&\\
  \end{tabular}

  where $-\frac{3}{4}\x^2\y^4=\x^3\y^3=\x^5$.

  Consider an arbitrary automorphism $\varphi\in\Aut S$,
  \begin{align}
    \varphi(\x)&=a_{11}\x+a_{12}\y+h_1(\x,\y),\quad h_1\in \mm^2,\\
    \varphi(\y)&=a_{21}\x+a_{22}\y+h_2(\x,\y),\quad h_2\in \mm^2.
  \end{align}
  Note that $\y$ is the only linear polynomial whose $5$th power degree is zero. Then $\varphi(\y^5)=0$ implies $a_{21}=0$.
  On the other hand, $\x$ is the only linear polynomial whose $5$th degree is not zero but lies in $\m^6$.
  Therefore,
  \begin{align}
    \varphi(\x)&=a_{11}\x+h_1(\x,\y),\quad h_1\in \mm^2,\\
    \varphi(\y)&=a_{22}\y+h_2(\x,\y),\quad h_2\in \mm^2,
  \end{align}
  where $a_{11},a_{22}\neq0$ due to invertibility of $\varphi$.
  Then we should notice that $\varphi((\x+\y)^6)=0$ if and only if $a_{11}=a_{22}=c$.
  Finally, $\varphi(x^3y^3-x^5)=c^6x^3y^3-c^5x^5=0$. It implies that $c=1$.

  Hence for an arbitrary element $z\in \m^i$ an inclusion $(\Aut S)\cdot z\subseteq z+\m^{i+1}$ holds. Thus, the group $\Aut S$ is unipotent.
\end{proof}
%
%\begin{definition}
%  An \emph{associated graded ideal} of an ideal $I\lhd\K[\![x_1,\ldots,x_n]\!]$ is a linear span $\mathcal{G}(I)$ of homogeneous components of the lowest degree of the polynomials of the ideal $I$. % REDO
%   Apparently, the set $\mathcal{G}(I)$ is a homogeneous ideal and it does not depend on the polynomial change of coordinates in $R$.
%\end{definition}
%\begin{remark}
% Note that $\mathcal{G}(I)$ may not be generated by homogeneous components of the lowest degree of the generators of the ideal $I$.
%\end{remark}
%\begin{proposition}
%$\gr S\cong R/\mathcal{G}(I)$.
%\end{proposition}
%\begin{proof}
%  Consider the $i$th homogeneous component
%  \begin{multline}
%    (\gr S)_i=\left.\m^i\right/\m^{i+1}=\left.\mm^i\right/(I\cap\mm^i+\mm^{i+1})=\left.\mm^i\right/((\mathcal{G}(I))_i+\mm^{i+1})=(R/\mathcal{G}(I))_i.
%  \end{multline}
% Together with the consistence of multiplication this equation implies the desired statement.
%\end{proof}

\section{Acknowledgements}
The author would like to express his gratitude to his scientific advisor I.~Arzhantsev for posing the problem, useful discussions and remarks. Thanks are also due to A. Elashvili for a thoughtful discussion and to M.~Zaidenberg for constructive remarks.
%Зайденбергу за стилистическую правку


\begin{thebibliography}{}
   \bibitem{abhya}S.S.~Abhyakar, \emph{Local analytic geometry}, World Scientific Publishing Co. Inc., River Edge, NJ, 2001.
   \bibitem{ArnGV} V.I. Arnold, S.M. Gusein-Zade, A.N. Varchenko, \emph{Singularities of differentiable maps, Vol. 1}, %Volume 82 of
   Monographs in Math. \textbf{82}, Birkh\"auser, Boston, 1985.

    \bibitem{ArTi} I.V. Azhantsev, D.A. Timashev, \emph{On the canonical embedding of certain homogeneous spaces}. In: ``Lie Groups and Invariant Theory: A.L.~Onishchik's jubilee volume'' (E.B.~Vinberg, Editor), AMS Translations, Series 2, vol. \textbf{213} (2005), 63--83.
%  \bibitem{Lambek} J. Lambek, \emph{Lectures on rings and modules}, 3rd ed., AMS Chelsea Publish., 2009.
  \bibitem{AtiM} M.F. Atiyah, I.G. Macdonald, \emph{Introduction to commutative algebra}, Addison-Wesley Publishing, Massachusetts, 1969.
  \bibitem{Artin} M. Auslander, I. Reiten, S.O. Smal\o, \emph{Representation theory of Artin algebras}, Cambridge Stud. Adv. Math. \textbf{36}, Cambridge Univ. Press, 1995.
  \bibitem{Elash} A. Elashvili, G. Khimshiashvili, \emph{Lie Algebras of Simple Hypersurface Singularities}, J. Lie Theory \textbf{16} (2006), 621--649.
  \bibitem{Jong}T. de Jong, G. Pfister, \emph{Local Analytic Geometry: Basic Theory and Applications}, Advanced Lectures in Mathematics, Vieweg, Braunschweig, 2000.
  \bibitem{Kempf} G.R. Kempf, \emph{Jacobians and Invariants}, Invent. Math. \textbf{112} (1993), 315--321.
  \bibitem{KrPro} H. Kraft, C. Procesi, \emph{Graded morphisms of G-modules}, Ann. Inst. Fourier \textbf{37} (1987), no. 4, 161--166.
  \bibitem{Mather-Yau} J. Mather, S. S.-T. Yau, \emph{Classification of isolated hypersurface singularities by their moduli algebras}, Invent. Math. \textbf{69} (1982), 243--251.
  \bibitem {ViOn} A.L. Onishchik, E.B. Vinberg, \emph{Lie Groups and Algebraic Groups}, Springer--Verlag, Berlin, 1990.
  \bibitem{Schulze} M. Schulze, \emph{A solvability criterion for the Lie algebra of derivations of a fat point}, J. Algebra \textbf{323} (2010), no. 10, 2916--2921.
  \bibitem{Ruckert} V. Weispfenning, \emph{Two Model Theoretic Proofs of R\"uckert's Nullstellensatz}, Trans. AMS \textbf{203} (1975), 331--342.
  \bibitem{Xu-Yau} Y.-J. Xu, S. S.-T. Yau, \emph{Micro-Local Chararacterization of Quazi-Homogeneous Varieties}, Amer. J. Math. \textbf{118} (1996), no. 2, 389--399.
  \bibitem{Ya1} S. S.-T. Yau, \emph{Continuous family of finite dimensional representations of a solvable Lie algebra arising from singularities}, Proc. Nat. Acad. Sci. USA \textbf{80} (Dec. 1983), 7694--7696.
  \bibitem{Yau} S. S.-T. Yau, \emph{Solvability of Lie algebras arising from isolated singularities and nonisolatedness of singularities defined by $sl(2,\CC)$ invariant polynomials}, Amer. J. Math. \textbf{113} (1991), no. 5, 773--778.
\end{thebibliography}
\end{document}